\newcommand{\losemi}{{\otimes \kern -.78em \ltimes}}
\newcommand{\rosemi}{{\otimes \kern -.78em \rtimes}}
\newcommand{\Dist}[1]{\operatorname{Dist}(#1)}
\newcommand{\rank}{\operatorname{rank}}
\newcommand{\St}{\operatorname{St}}
\newcommand{\leqnomode}{\tagsleft@true}
\newcommand{\reqnomode}{\tagsleft@false}
\newtheorem{theorem}{Theorem}[subsection]
\let\c@fact\c@theorem\makeatother
\let\c@note\c@theorem\makeatother
\newtheorem{lemma}{Lemma}[subsection]
\let\c@lemma\c@theorem\makeatother
\let\c@lemma\c@theorem\makeatother
\let\c@alg\c@theorem\makeatother
\let\c@prop\c@theorem\makeatother
\let\c@conj\c@theorem\makeatother
\newtheorem{cor}{Corollary}[subsection]
\let\c@cor\c@theorem\makeatother
\let\c@defn\c@theorem\makeatother
\theoremstyle{definition}
\newtheorem{remark}{Remark}[subsection]
\let\c@remark\c@theorem\makeatother
\let\c@example\c@theorem\makeatother
\numberwithin{equation}{subsection}
\crefname{theorem}{Theorem}{Theorems}
\crefname{fact}{Fact}{Facts}
\crefname{note}{Note}{Notes}
\crefname{lemma}{Lemma}{Lemmas}
\crefname{alg}{Algorithm}{Algorithms}
\crefname{remark}{Remark}{Remarks}
\crefname{example}{Example}{Examples}
\crefname{prop}{Proposition}{Propositions}
\crefname{conj}{Conjecture}{Conjectures}
\crefname{cor}{Corollary}{Corollaries}
\crefname{defn}{Definition}{Definitions}
\crefname{equation}{\!\!}{\!\!} 
\newcounter{listequation}
\begin{document}

\title{Steinberg Squares and tensor products of\\ Tilting modules with simple Modules}

\author{Paul Sobaje}
\address{Department of Mathematics \\
          Georgia Southern University}
\email{psobaje@georgiasouthern.edu}
\date{\today}
\subjclass[2010]{Primary 20G05}

\begin{abstract}
Let $G$ be a simple and simply connected algebraic group over an algebraically closed field $\Bbbk$ of characteristic $p>0$.  We establish an isomorphism of $G$-modules between a direct sum of modules $\St \otimes \St$ and a direct sum of tensor products of simple modules of restricted highest weight with tilting modules that are projective over the Frobenius kernel of $G$.  This isomorphism holds precisely when Donkin's Tilting Module Conjecture does, and thus can be seen as providing a $G$-module theoretic characterization of this conjecture.
\end{abstract}

\maketitle

\section{Introduction}

Let $G$ be a simple and simply connected algebraic group over an algebraically closed field $\Bbbk$ of characteristic $p>0$.  The Steinberg module $\St$ is the simple $G$-module $L((p-1)\rho)$, where $\rho$ is the sum of the fundamental dominant weights.  It is a tilting module, is self-dual, and is both simple and projective over the Frobenius kernel $G_1$.  It is the most important example of a projective indecomposable $G_1$-module that lifts to a $G$-module, and has been recognized since the earliest work on the topic as holding the key to determining which other projective indecomposable $G_1$-modules carry a $G$-structure.

What has become clear in recent years is that the modules $\St \otimes L(\lambda)$, with $\lambda \in X_1(T)$ (the set of $p$-restricted dominant weights), contain even more information regarding this problem than first suspected.  Specifically, the projective indecomposable $G_1$-modules lift to tilting modules for $G$ only if $\St \otimes L(\lambda)$ is tilting \cite{KN}.  Pairing this with the fact that tilting modules have become main players in the character theory of $G$ (\cite{RW}, \cite{AMRW}), we find that understanding the complete decompositions of these modules, over $G$ and over $G_1$, is more important than ever.

The highest weight summand of $\St \otimes L(\lambda)$ is the tilting module $T((p-1)\rho+\lambda)$ \cite{P}.  One can also compute the character of the component of this module that lies in the $G_1$-block of the Steinberg module.  And while further analysis has been carried out in \cite{BNPS} (see \cite{K} for a related setting), there is no known method of determining all of the summands of these modules.  One might hope that $\St \otimes \St$ (the ``Steinberg square'') would be more reasonable to work with since the character of this module is easily computed, but this case seems to be as hard as any of the others.

The main result of this note is to show that a direct sum of Steinberg squares can be described, in a sense, by proving it to be isomorphic to a direct sum of tensor products between $G_1$-projective tilting modules and simple $G$-modules of restricted highest weight.  This isomorphism holds precisely when Donkin's Tilting Module Conjecture does, and thus can be seen as providing a $G$-module theoretic characterization of this conjecture (i.e. without reference to $G_1$).

\begin{theorem}\label{T:A}
There is an isomorphism of $G$-modules
$$(\St \otimes \St)^{\oplus p^{\rank(G)}} \cong \bigoplus_{\lambda \in X_1} \left(T((p-1)\rho+\lambda) \otimes L((p-1)\rho-\lambda) \right)$$
if and only if $T((p-1)\rho+\lambda)$ is indecomposable over $G_1$ for every $\lambda \in X_1(T)$.
\end{theorem}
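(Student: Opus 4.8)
The plan is to pass to the Frobenius kernel $G_1$ and to sort both sides according to their indecomposable $G_1$-projective summands. Under the hypothesis of the theorem, a $G_1$-indecomposable $G_1$-projective module of highest weight $(p-1)\rho+\lambda = 2(p-1)\rho+w_0\mu$ must be the projective indecomposable $G_1$-module $Q_1(\mu)$ with that $G_1T$-highest weight, where $\mu=(p-1)\rho+w_0\lambda$; thus the condition is exactly that $\widehat{Q}(\mu):=T((p-1)\rho+\lambda)$ satisfies $\widehat{Q}(\mu)|_{G_1}\cong Q_1(\mu)$, i.e. that every $Q_1(\mu)$, $\mu\in X_1$, lifts to $G$. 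Reindexing the right-hand side by $\mu$ and noting $L((p-1)\rho-\lambda)=L(\mu^*)$, everything takes place inside the category of $G_1$-projective $G$-modules, since $T((p-1)\rho+\lambda)$ is a summand of $\St\otimes L(\lambda)$.

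For the forward implication I would argue by dimensions. As $T((p-1)\rho+\lambda)$ is $G_1$-projective and its highest weight singles out $Q_1(\mu)$ as the summand carrying that weight, one always has $\dim T((p-1)\rho+\lambda)\geq \dim Q_1(\mu)$, with equality precisely when $T((p-1)\rho+\lambda)$ is $G_1$-indecomposable. The unconditional input is the tautological count
$$\sum_{\mu\in X_1}\dim Q_1(\mu)\,\dim L(\mu)=\dim\Dist{G_1}=p^{\dim G}=p^{\rank(G)}(\dim\St)^2,$$
using $\dim\St=p^{|\Phi^+|}$ and $\dim G=\rank(G)+|\Phi|$. If the module isomorphism holds, then equating total dimensions and subtracting this identity gives $\sum_\lambda\bigl(\dim T((p-1)\rho+\lambda)-\dim Q_1(\mu)\bigr)\dim L(\mu^*)=0$; every term is nonnegative, so each vanishes and each $T((p-1)\rho+\lambda)$ is $G_1$-indecomposable.

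For the converse I would invoke the structural fact that, once every $Q_1(\mu)$ lifts to $\widehat{Q}(\mu)$, the evaluation map $\bigoplus_\mu\widehat{Q}(\mu)\otimes\Hom_{G_1}(\widehat{Q}(\mu),M)\to M$ is a $G$-isomorphism for each $G_1$-projective $G$-module $M$ (it is $G$-linear and restricts to the standard $G_1$-decomposition), with multiplicity spaces of the form $V_\mu^{(1)}$. Applied to both sides, a highest-weight estimate removes the Frobenius twists: every weight of $\St\otimes\St$ and of each $T((p-1)\rho+\lambda)\otimes L((p-1)\rho-\lambda)$ is $\leq 2(p-1)\rho$, while $\widehat{Q}(\mu)$ has highest weight $2(p-1)\rho+w_0\mu$, so $p\cdot\operatorname{hw}(V_\mu)\leq\mu^*\leq(p-1)\rho$, and the standard estimate then forces $\operatorname{hw}(V_\mu)=0$, hence $V_\mu$ trivial. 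Both sides therefore take the form $\bigoplus_\mu\widehat{Q}(\mu)^{\oplus d_\mu}$ with $d_\mu=[\,M|_{G_1}:Q_1(\mu)\,]$, and it remains only to match these multiplicities.

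That matching is the crux and the step I expect to be the main obstacle. Concretely it is the $G_1$-module identity
$$p^{\rank(G)}\,[\,\St\otimes\St:Q_1(\mu)\,]=\sum_{\nu\in X_1}[\,L(\mu)\otimes L(\nu):L(\nu)\,]\qquad(\mu\in X_1),$$
the right-hand side obtained from the head multiplicities of $\bigoplus_\lambda Q_1((p-1)\rho+w_0\lambda)\otimes L((p-1)\rho-\lambda)$ after using $L((p-1)\rho-\lambda)^\ast\cong L((p-1)\rho+w_0\lambda)$ and reindexing. I would prove it through the Brauer-character homomorphism $K_0(G_1)\otimes\mathbb{C}\to\operatorname{Fun}(T_1)$, a ring isomorphism carrying the restricted simples to a basis of functions on the finite torus $T_1$: the right-hand side is the trace of multiplication by $[L(\mu)]$, equal to $\sum_{t\in T_1}\chi_{L(\mu)}(t)=p^{\rank(G)}\dim L(\mu)^{T_1}$, while the left-hand side reduces by projectivity of $\St$ to $p^{\rank(G)}[\,\St\otimes L(\mu):L((p-1)\rho)\,]$. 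The genuinely delicate point is the identification $[\,\St\otimes\St:Q_1(\mu)\,]=\dim L(\mu)^{T_1}$, expressing the number of Steinberg summands of $\St\otimes L(\mu)$ through the modular weight combinatorics of $L(\mu)$; controlling these $G_1$-head multiplicities is where the substantive representation-theoretic work lies.
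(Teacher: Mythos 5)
Your first direction (the isomorphism forces $G_1$-indecomposability) is sound, and it is essentially the dimension count that the paper leaves implicit: it rests on the unconditional fact that $\widehat{Q}_1((p-1)\rho+w_0\lambda)$ is a $G_1T$-summand of $T((p-1)\rho+\lambda)$, together with $\sum_{\mu}\dim Q_1(\mu)\dim L(\mu)=p^{\dim G}$. The converse, however, has genuine gaps at each step where you try to work directly over $G$. (i) Your ``structural fact'' is false as stated: $\dim\Hom_{G_1}(\widehat{Q}(\mu),M)$ is the composition multiplicity $[M:L_1(\mu)]$, not a summand multiplicity, so for $M=\widehat{Q}(\nu)$ with $\nu\neq(p-1)\rho$ the two sides of your evaluation map already have different dimensions (the discrepancy is the Cartan matrix). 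The correct multiplicity spaces involve the $G_1$-head, and then there is no natural evaluation map; producing a $G$-equivariant splitting is precisely the hard point, which is why the paper proves its decomposition statement (Lemma \ref{L:projdecomp}) only over $G_1\rtimes T$, never over $G$. (ii) The weight estimate does not kill the Frobenius twists: $p\nu\le\mu^*$ with $\nu$ dominant nonzero and $\mu^*\in X_1(T)$ can happen in rank $\ge 2$. Already for $SL_3$, $p=3$: $3\varpi_1=2\rho-\alpha_2\le 2\rho=(p-1)\rho$, so the $G_1$-projective module $\St\otimes L(\varpi_1)^{[1]}=L((p-1)\rho+p\varpi_1)$ has all weights $\le 2(p-1)\rho$ yet carries a nontrivial twist; weight inequalities alone cannot force $V_\mu$ to be trivial. (iii) Most importantly, the multiplicity identity you call the crux is left unproven, and even your Brauer-character reduction terminates at another unproven identity, $[\St\otimes\St:Q_1(\mu)]=\dim L(\mu)^{T_1}$. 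That identity is not scaffolding; it is the actual content of the theorem.

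The paper's route avoids all of this. It first proves the unconditional $G_1T$-isomorphism $(\St\otimes\St)^{\oplus p^{\rank(G)}}\cong\bigoplus_{\lambda\in X_1}\widehat{Q}_1(\lambda)\otimes L(\lambda)^*$ (Theorem \ref{T:B}), which is exactly your crux identity in refined form. The mechanism is one your outline does not contain: the adjoint action of $G$ on $\Dist{G_1}$ is the restriction of the two-sided $G\times G$-action, the subgroup scheme $(G_1\times 1)\delta(T)\cong G_1\rtimes T$ acts projectively on $\Dist{G_1}$, and the Cline--Parshall--Scott computation of $\Dist{G_1}/\textup{rad}_{G_1}(\Dist{G_1})\cong\bigoplus_\lambda L(\lambda)\otimes L(\lambda)^*$ combined with Lemma \ref{L:projdecomp} identifies $\Dist{G_1}\cong\bigoplus_\lambda \pi_1^*(\widehat{Q}_1(\lambda))\otimes\pi_2^*(L(\lambda)^*)$; comparing with $\textup{ch}(\Dist{G_1})=p^{\rank(G)}\textup{ch}(\St\otimes\St)$ gives Theorem \ref{T:B}, since projective $G_1T$-modules are determined by their characters. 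Second, to climb from $G_1T$ to $G$ under the indecomposability hypothesis, the paper decomposes nothing over $G$: it quotes Kildetoft--Nakano to conclude that each $T((p-1)\rho+\lambda)\otimes L((p-1)\rho-\lambda)$ is tilting, observes the left-hand side is tilting as well, and invokes the fact that tilting modules are determined by their characters. If you want to salvage your outline, replace steps (i) and (ii) by this tilting argument, and concentrate all effort on proving the crux identity, i.e.\ on Theorem \ref{T:B}.
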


It is still not clear whether the tilting module conjecture will prove true in all characteristics, but it is undeniably a good guess.  Virtually nothing important about tilting modules (that we can think of) is characteristic dependent, and their Ext-vanishing properties and closure as a set under duality make them a natural home for the lifts of projective $G_1$-modules (which are also the injective $G_1$-modules).  It is almost hard to see a reason why this conjecture will fail to hold somewhere (though it might).  We view Theorem \ref{T:A} as adding to this sentiment, as the statement of the isomorphism appears to be so natural it suggests it should be true for all $p$.  It is also worth adding here that there are primes (infinitely many in fact) for which we do not yet know if the tilting module conjecture holds, but do know that the modules $T((p-1)\rho+\lambda) \otimes L((p-1)\rho-\lambda)$ are tilting (see \cite{BNPS}).  

This paper was motivated by recent work by Donkin \cite{D}, in which he proved, for $p$ good, that the module $\St \otimes \Dist{G_1}$ is tilting, where $G$ acts via the adjoint action on $\Dist{G_1}$.  One can see by character considerations that this tilting module is isomorphic to $p^{\rank(G)}$ copies of $\St \otimes \St \otimes \St$, and our main results came from looking for another natural way of describing this module, which was achieved by working in the category of $G_1T$-modules.  Specifically, with $\widehat{Q}_1(\lambda)$ denoting the $G_1T$-projective cover of $L(\lambda)$, we prove the following result (which is seen to be imply Theorem \ref{T:A} by repackaging the way the weights in this theorem are written, and applying a result from \cite{KN}).

\begin{theorem}\label{T:B}
There is an isomorphism of $G_1T$-modules
$$(\St \otimes \St)^{\oplus p^{\rank(G)}} \cong \bigoplus_{\lambda \in X_1} \left(\widehat{Q}_1(\lambda) \otimes L(\lambda)^* \right).$$
\end{theorem}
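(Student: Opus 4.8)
The plan is to deduce the isomorphism from an equality of formal characters. Both sides are projective over $G_1T$: the left side because $\St$ is $G_1$-projective, and each summand $\widehat{Q}_1(\lambda)\otimes L(\lambda)^*$ on the right because $\widehat{Q}_1(\lambda)$ is projective and projectivity is preserved under tensoring with a finite-dimensional module. Since the formal characters of the indecomposable $G_1T$-projectives $\widehat{Q}_1(\mu)$ are linearly independent — the transition to the baby Verma characters $\operatorname{ch}\widehat{Z}(\nu)=e^{\nu}D$, with $D:=\prod_{\alpha>0}(1+e^{-\alpha}+\cdots+e^{-(p-1)\alpha})$, is unitriangular and hence invertible — a $G_1T$-projective module is determined up to isomorphism by its character. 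Thus it suffices to show that both sides have character $p^{\rank(G)}(\operatorname{ch}\St)^2$, where $\operatorname{ch}\St=e^{(p-1)\rho}D$ by the Weyl character formula for $(p-1)\rho$.

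The left side clearly has this character, so the content lies in the right side, which I would compute by identifying it with a third module. Let $\Dist{G_1}$ carry the adjoint action of $G_1T$, namely the restriction to $G_1T\subset G$ of the conjugation action of $G$ on its Frobenius kernel. Using the triangular decomposition $\Dist{G_1}=\Dist{U_1^-}\cdot\Dist{T_1}\cdot\Dist{U_1^+}$, the divided-power root vectors spanning $\Dist{U_1^{\pm}}$ have adjoint $T$-weights in $\{0,\pm\alpha,\dots,\pm(p-1)\alpha\}$ while $\Dist{T_1}$ sits in weight $0$ with dimension $p^{\rank(G)}$, so the $T$-character is $p^{\rank(G)}\prod_{\alpha>0}(1+e^{-\alpha}+\cdots+e^{-(p-1)\alpha})(1+e^{\alpha}+\cdots+e^{(p-1)\alpha})=p^{\rank(G)}e^{2(p-1)\rho}D^2=p^{\rank(G)}(\operatorname{ch}\St)^2$.

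It then remains to establish the module isomorphism $\Dist{G_1}\cong\bigoplus_{\lambda\in X_1}\widehat{Q}_1(\lambda)\otimes L(\lambda)^*$ of $G_1T$-modules, which is the main obstacle and amounts to a Peter--Weyl-type decomposition of the adjoint representation. As a left module under multiplication, $\Dist{G_1}\cong\bigoplus_{\lambda}\widehat{Q}_1(\lambda)^{\oplus\dim L(\lambda)}$; the idea is to use the antipode to convert the commuting right-multiplication action into the diagonal (adjoint) action, thereby upgrading each multiplicity space to a copy of $L(\lambda)^*$ as a genuine $G_1T$-module, consistent with matching $\operatorname{soc}(\widehat{Q}_1(\lambda)\otimes L(\lambda)^*)\supseteq L(\lambda)\otimes L(\lambda)^*$ against the socle of the adjoint representation. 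Granting this, the right side also has character $p^{\rank(G)}(\operatorname{ch}\St)^2$, and the character comparison of projectives yields the asserted isomorphism; Theorem \ref{T:A} then follows by the repackaging of weights and the result of \cite{KN} indicated above. The delicate point is precisely the identification of the multiplicity spaces of the adjoint representation as the duals $L(\lambda)^*$ — equivalently, that $G_1$ acts nontrivially and diagonally on them — rather than as mere $\dim L(\lambda)$-dimensional weight spaces, since it is this diagonal action that produces the summands $\widehat{Q}_1(\lambda)\otimes L(\lambda)^*$ on the right-hand side.
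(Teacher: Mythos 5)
Your overall architecture actually matches the paper's: both arguments reduce Theorem \ref{T:B} to a character identity (using that projective $G_1T$-modules are determined by their characters), and both obtain that identity by decomposing $\Dist{G_1}$, whose conjugation $T$-character is $p^{\rank(G)}\textup{ch}(\St\otimes\St)$ by the triangular decomposition. But the step you yourself flag as ``the main obstacle'' is formulated in a way that is not just unproved but false. You equip $\Dist{G_1}$ with the adjoint (conjugation) action of $G_1T\subset G$ and claim $\Dist{G_1}\cong\bigoplus_{\lambda\in X_1}\widehat{Q}_1(\lambda)\otimes L(\lambda)^*$ as $G_1T$-modules. No such isomorphism can exist: under conjugation the unit $1\in\Dist{G_1}$ is fixed and the counit $\epsilon:\Dist{G_1}\to\Bbbk$ is equivariant, so $\Bbbk\cdot 1$ splits off as a \emph{trivial} direct summand of the adjoint representation; hence the adjoint module is not projective over $G_1$, whereas your right-hand side is. Your antipode trick cannot repair this: the moment you convert the commuting right-multiplication action into a diagonal action to make the multiplicity spaces genuine $G_1T$-modules, the $G_1$-structure becomes conjugation rather than left multiplication, and projectivity is lost at exactly that point. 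Inside a single action of the subgroup $G_1T\le G$ you cannot have both ``$G_1$ acts projectively'' and ``$G_1$ acts diagonally.''

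The paper's resolution is a mixed action that is not an action of any subgroup of $G$: let $G_1$ act by left multiplication and $T$ by conjugation. These generate the subgroup $(G_1\times 1)\delta(T)\le G\times G$, which is an external semidirect product $G_1\rtimes T$ covering $G_1T$; Section 2 of the paper exists precisely to develop its representation theory (classification of simples and projectives, and the fact that a semisimple module is determined by the $T$-characters of its $G_1$-isotypic components). Under this action $\Dist{G_1}$ \emph{is} projective (left multiplication is free), its head is identified via Cline--Parshall--Scott \cite{CPS} as $\bigoplus_{\lambda}L(\lambda)\otimes L(\lambda)^*$ with $L(\lambda)\otimes L(\lambda)^*$ the $L_1(\lambda)$-isotypic component, and one obtains $\Dist{G_1}\cong\bigoplus_{\lambda}\pi_1^*(\widehat{Q}_1(\lambda))\otimes\pi_2^*(L(\lambda)^*)$ over $G_1\rtimes T$, where $\pi_2^*(L(\lambda)^*)$ carries only a $T$-action. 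Comparing the two computations of the conjugation $T$-character then yields exactly the identity $p^{\rank(G)}\textup{ch}(\St\otimes\St)=\sum_{\lambda}\textup{ch}(\widehat{Q}_1(\lambda))\,\textup{ch}(L(\lambda)^*)$ that your final step requires. So the character identity you want is true, and your concluding step from it is correct, but your route to it passes through a false intermediate isomorphism; the detour through $G_1\rtimes T$ is not a technical convenience but the essential idea needed to make your outline work.
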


It is a basic result from the representation theory of the finite dimensional algebra $\Dist{G_1}$ that the dimensions of the modules in the isomorphism of this theorem are the same.  Moreover, since the maximal weight of  $\widehat{Q}_1(\lambda)$ is $2(p-1)\rho+w_0\lambda$ \cite[Lemma II.11.6]{rags}, it is straightforward to verify that the maximal weight appearing on each side of the isomorphism is $2(p-1)\rho$, and that it appears $p^{\rank(G)}$ times in both cases.  Nonetheless, showing that it was an isomorphism proved to be nontrivial, and our argument required a detour into the representation theory of the group scheme $G_1 \rtimes T$.  This group scheme, and the technique of working with an external semi-direct product group covering a subgroup scheme of $G$, may be of future use in this area.

\bigskip
\noindent \textbf{Acknowledgements} \, The author thanks Steve Donkin for several helpful correspondences and ideas that he shared as this work was being done.

\section{The representation theory of $G_1 \rtimes T$}

All notation follows that found in \cite{rags}.  In order to give statements over $G_1T$, it is necessary that we work over the slightly larger group scheme $G_1 \rtimes T$.  We begin with a general lemma about semidirect products in which the non-normal factor is a torus.

\begin{lemma}\label{L:torussummands}
Let $\mathcal{H}$ be an affine group scheme over $\Bbbk$, and $D$ an algebraic torus over $\Bbbk$ which acts on $\mathcal{H}$ by affine group scheme automorphisms.  If $V$ is a finite dimensional $\mathcal{H} \rtimes D$-module, then there is a decomposition
$$V = V_1 \oplus V_2 \oplus \cdots \oplus V_m$$
such that each $V_i$ is indecomposable over $\mathcal{H}$.  In particular, $V$ is indecomposable over $\mathcal{H} \rtimes D$ if and only if it is indecomposable over $\mathcal{H}$.
\end{lemma}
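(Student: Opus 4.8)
The plan is to reduce everything to the single nontrivial implication: if $V$ is indecomposable over $\mathcal{H} \rtimes D$ then it is already indecomposable over $\mathcal{H}$. The reverse implication is immediate, since any $\mathcal{H} \rtimes D$-module decomposition is in particular an $\mathcal{H}$-module decomposition. And the asserted decomposition $V = V_1 \oplus \cdots \oplus V_m$ then follows formally: first write $V$ as a direct sum of $\mathcal{H} \rtimes D$-indecomposable submodules (Krull--Schmidt applies, as $V$ is finite dimensional over $\Bbbk$ and hence of finite length), and then apply the nontrivial implication to each summand.

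To attack the nontrivial implication I would pass to the endomorphism algebra $E = \End_{\mathcal{H}}(V)$, a finite-dimensional $\Bbbk$-algebra. The torus $D$ acts on $E$ by algebra automorphisms via $(d \cdot \phi)(v) = d\,\phi(d^{-1}v)$; one checks, using that conjugation $d h d^{-1}$ realizes the given $D$-action on $\mathcal{H}$, that $d \cdot \phi$ is again $\mathcal{H}$-linear. Because $V$ is a rational $D$-module, this action is rational, so $E$ decomposes into $D$-weight spaces $E = \bigoplus_{\chi \in X(D)} E_\chi$, an $X(D)$-grading of the algebra with $E_\chi E_\psi \subseteq E_{\chi + \psi}$, whose zero-weight piece is exactly the fixed algebra $E_0 = E^D = \End_{\mathcal{H} \rtimes D}(V)$. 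Since a finite-dimensional module is indecomposable precisely when its endomorphism algebra is local, the whole lemma reduces to the purely algebraic claim that if $E_0$ is local then $E$ is local.

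For this claim I would argue through the semisimple quotient. The Jacobson radical $\operatorname{rad}(E)$ is invariant under every algebra automorphism, hence $D$-stable, hence a graded ideal; thus $\bar E = E/\operatorname{rad}(E)$ is a graded semisimple algebra with zero-weight component $(\bar E)_0 = E_0/(E_0 \cap \operatorname{rad}(E))$. As $E_0 \cap \operatorname{rad}(E)$ is a nil ideal of the local ring $E_0$, the quotient $(\bar E)_0$ is again local and so contains no idempotents other than $0$ and $1$. Writing $\bar E \cong \prod_i M_{n_i}(\Bbbk)$ (using that $\Bbbk$ is algebraically closed), the finite set of central primitive idempotents is permuted by $\operatorname{Aut}(\bar E)$ and therefore fixed by the connected group $D$, so they all lie in $(\bar E)_0$; locality of $(\bar E)_0$ forces a single block $\bar E = M_n(\Bbbk)$. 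Finally, by Skolem--Noether the $D$-action on $M_n(\Bbbk)$ is conjugation through a homomorphism $D \to \mathrm{PGL}_n(\Bbbk)$ whose image, being a torus, may be conjugated into the diagonal; then $(\bar E)_0$ contains the idempotent $e_{11}$, forcing $n = 1$. Hence $\bar E = \Bbbk$ is a division algebra, $E$ is local, and $V$ is indecomposable over $\mathcal{H}$.

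I expect the main obstacle to be exactly this last step, namely showing that locality of $E_0$ propagates to $E$, equivalently that no nontrivial idempotent of $E$ can avoid contributing one in the zero-weight space $E_0$. The essential leverage is that $D$ is a \emph{connected torus}: connectedness pins down the central idempotents of $\bar E$, while diagonalizability (through Skolem--Noether) pins down the matrix idempotents, so that no nontrivial idempotent can hide outside degree zero. In writing this up I would also take care to verify the two supporting facts used silently above, namely that $\operatorname{rad}(E)$ is graded and that $E_0 = \End_{\mathcal{H} \rtimes D}(V)$.
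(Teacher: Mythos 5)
Your proof is correct, but it takes a genuinely different route from the paper's. The paper works at the group level inside $GL(V)$: writing $\varphi:\mathcal{H}\rtimes D\rightarrow GL(V)$ for the representation, it identifies $\operatorname{End}_{\mathcal{H}}(V)$ with the Lie algebra of the centralizer $C=C_{GL(V)}(\varphi(\mathcal{H}))$, notes that any set of orthogonal idempotents $e_1,\ldots,e_n$ in $\operatorname{End}_{\mathcal{H}}(V)$ lies in the Lie algebra of a maximal torus of the connected group $C\varphi(D)$, and then uses conjugacy of maximal tori (together with commutativity of $D$, to arrange the conjugating element to lie in $C$) to find $g\in C=\operatorname{Aut}_{\mathcal{H}}(V)$ such that the idempotents $g^{-1}e_ig$ commute with $\varphi(D)$; applied to a primitive orthogonal decomposition of $1$, this \emph{directly} produces a decomposition of $V$ into $\mathcal{H}\rtimes D$-submodules that are $\mathcal{H}$-indecomposable. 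You instead work at the algebra level: you grade $E=\operatorname{End}_{\mathcal{H}}(V)$ by $X(D)$, pass to the semisimple quotient $\bar{E}$, and use connectedness of $D$ (to fix the central primitive idempotents, i.e.\ the blocks) plus Skolem--Noether and conjugacy of tori in $\mathrm{PGL}_n$ (to exhibit diagonal idempotents in the fixed algebra), thereby proving the implication ``$E_0$ local $\Rightarrow$ $E$ local'' and recovering the decomposition formally afterwards. Both arguments ultimately rest on the same two pillars---connectedness of the torus and conjugacy of (maximal) tori---but yours isolates a clean, reusable statement about rational torus actions on finite-dimensional algebras (if the fixed subalgebra is local, the whole algebra is local), at the cost of invoking Skolem--Noether and the block structure of $\bar{E}$; the paper's argument handles all idempotents simultaneously and produces the $D$-stable decomposition in one step, at the cost of the somewhat delicate fact that commuting idempotents of $\operatorname{End}_{\mathcal{H}}(V)$ lie in the Lie algebra of a maximal torus of $C$. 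The two supporting facts you flag for verification are indeed standard and unproblematic: $E$ is an $\mathcal{H}\rtimes D$-stable subspace of the rational module $\operatorname{End}_{\Bbbk}(V)$ on which $\mathcal{H}$ acts trivially, so the $D$-action is rational and $E^D=E_0=\operatorname{End}_{\mathcal{H}\rtimes D}(V)$; and since $D(\Bbbk)$ is Zariski dense in $D$, the automorphism-stable subspace $\operatorname{rad}(E)$ is a $D$-submodule, hence a graded ideal.
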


\begin{proof}
The representation is given by a group scheme homomorphism
$$\varphi: \mathcal{H} \rtimes D \rightarrow GL(V)$$
We have that
$$\text{Aut}_{\mathcal{H}}(V) = C_{GL(V)}(\varphi(\mathcal{H})),$$
and
$$\text{End}_{\mathcal{H}}(V) = C_{\mathfrak{gl}(V)}(\varphi(\mathcal{H})) = \text{Lie}(C_{GL(V)}(\varphi(\mathcal{H}))).$$
Since $D$ normalizes $\mathcal{H}$, $\varphi(D)$ normalizes $C_{GL(V)}(\varphi(\mathcal{H}))$.  Thus the product
$$C_{GL(V)}(\varphi(\mathcal{H}))\varphi(D)$$
is a closed connected subgroup of $GL(V)$ (being generated by closed connected subgroups).  Any set of orthogonal idempotents $e_1,e_2,\ldots,e_n$ in $\text{End}_{\mathcal{H}}(V)$ lie inside the Lie algebra of some maximal torus of $C_{GL(V)}(\varphi(\mathcal{H}))$ (because this algebraic group is a centralizer of a subgroup scheme), and therefore lie inside the Lie algebra some maximal torus
$$S \le C_{GL(V)}(\varphi(\mathcal{H}))\varphi(D).$$
By the conjugacy of maximal tori in an algebraic group over $\Bbbk$, there is some
$$g \in C_{GL(V)}(\varphi(\mathcal{H}))\varphi(D)$$
such that $g\varphi(D)g^{-1} \le S$.  Furthermore, since $\varphi(D)$ acts trivially on itself by conjugation, we may assume that $g \in C_{GL(V)}(\varphi(\mathcal{H}))$.  It now follows that
$$\{g^{-1}e_1g,\ldots,g^{-1}e_ng\}$$
is a set of orthogonal idempotents in $\text{End}_{\mathcal{H}}(V)$ that commute with $\varphi(D)$, hence lie in $\text{End}_{\mathcal{H} \rtimes D}(V)$.
\end{proof}

The inclusions of $T$ and $G_1$ into $G$ induce a surjective group scheme homomorphism
$$\pi_1: G_1 \rtimes T \rightarrow G_1T.$$
Note that $\pi_1$ is injective on both $G_1 \rtimes 1$ and on $1 \rtimes T$.  There is also the natural quotient homomorphism
$$\pi_2: G_1 \rtimes T \rightarrow T.$$

We see that $T_1 \rtimes T \subseteq G_1 \rtimes T$ is a maximal subgroup scheme of multiplicative type inside $G_1 \rtimes T$.  As a result, the character group of $G_1 \rtimes T$ identifies with $X_1(T) \times X(T)$.  We will show below that this set indexes the simple $G_1 \rtimes T$-modules.  However, when we refer to the ``$T$-character'' of a $G_1 \rtimes T$-module, we are considering it only as a module over $1 \rtimes T$.

Using $\pi_1$ and $\pi_2$ we can pull back each simple $G_1T$-module and simple $T$-module to $G_1 \rtimes T$.  As both homomorphisms are injective on $1 \rtimes T$, the character of $\pi_1^*(\widehat{L}_1(\lambda)) \otimes \pi_2^*(\mu)$ is just the product in $\mathbb{Z}[X(T)]$ of the character of $\widehat{L}_1(\lambda)$ with $e(\mu)$.  We record this fact for later use.

\begin{lemma}
For each $\lambda,\mu \in X(T)$ we have
$$\textup{ch}(\pi_1^*(\widehat{L}_1(\lambda)) \otimes \pi_2^*(\mu)) = \textup{ch}(\widehat{L}_1(\lambda))e(\mu),$$
and
$$\textup{ch}(\pi_1^*(\widehat{Q}_1(\lambda)) \otimes \pi_2^*(\mu)) = \textup{ch}(\widehat{Q}_1(\lambda))e(\mu).$$
\end{lemma}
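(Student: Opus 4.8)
The plan is to reduce both identities to the observation, already recorded in the paragraph preceding the lemma, that $\pi_1$ and $\pi_2$ are injective on $1 \rtimes T$, combined with the multiplicativity of characters under tensor products. Throughout, I keep in mind that the character of a $G_1 \rtimes T$-module means its character as a module over the subtorus $1 \rtimes T$, whose character group has been identified with $X(T)$; thus every character in sight lives in $\mathbb{Z}[X(T)]$.

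First I would record that restriction to the closed subgroup scheme $1 \rtimes T$ is a tensor functor, so that for any two finite-dimensional $G_1 \rtimes T$-modules $M$ and $N$ one has $\textup{ch}(M \otimes N) = \textup{ch}(M)\,\textup{ch}(N)$ in $\mathbb{Z}[X(T)]$. This is just the fact that $(M \otimes N)|_{1 \rtimes T} = M|_{1 \rtimes T} \otimes N|_{1 \rtimes T}$ together with the additivity of weights on tensor products of $T$-modules. It therefore suffices to compute the $(1 \rtimes T)$-characters of each tensor factor separately and multiply.

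For the factor $\pi_1^*(\widehat{L}_1(\lambda))$, I would use that the composite $1 \rtimes T \hookrightarrow G_1 \rtimes T \xrightarrow{\pi_1} G_1T$ is an isomorphism of $1 \rtimes T$ onto the subtorus $T$ of $G_1T$. Consequently the restriction of $\pi_1^*(\widehat{L}_1(\lambda))$ to $1 \rtimes T$ agrees, under this identification, with the restriction of $\widehat{L}_1(\lambda)$ to $T$, so that $\textup{ch}(\pi_1^*(\widehat{L}_1(\lambda))) = \textup{ch}(\widehat{L}_1(\lambda))$. For the factor $\pi_2^*(\mu)$, the composite $1 \rtimes T \hookrightarrow G_1 \rtimes T \xrightarrow{\pi_2} T$ is likewise an isomorphism, and $\pi_2^*(\mu)$ is the one-dimensional module on which $1 \rtimes T$ acts through $\mu$; hence $\textup{ch}(\pi_2^*(\mu)) = e(\mu)$. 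Multiplying the two characters yields the first displayed identity. The second identity is proved verbatim, replacing $\widehat{L}_1(\lambda)$ by the $G_1T$-module $\widehat{Q}_1(\lambda)$ throughout: nothing in the argument uses the simplicity of $\widehat{L}_1(\lambda)$, only that it is a $G_1T$-module pulled back along $\pi_1$.

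There is no serious obstacle here; the content is entirely the bookkeeping of how $\pi_1$ and $\pi_2$ restrict to the torus $1 \rtimes T$. The one point requiring care is to keep the distinct copies of $T$ straight — the torus $1 \rtimes T$ over which characters are taken, versus the target tori of $\pi_1$ and $\pi_2$ — and to confirm that each restriction map is genuinely an isomorphism of group schemes of multiplicative type, which is precisely the injectivity on $1 \rtimes T$ recorded before the lemma.
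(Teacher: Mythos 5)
Your proof is correct and follows essentially the same route as the paper: the paper states this lemma without a separate proof, deriving it from the immediately preceding observation that $\pi_1$ and $\pi_2$ are both injective on $1 \rtimes T$, so that the $T$-character of the pullback tensor product is the product $\textup{ch}(\widehat{L}_1(\lambda))e(\mu)$ in $\mathbb{Z}[X(T)]$. Your write-up simply makes explicit the two bookkeeping steps (restriction to $1 \rtimes T$ is a tensor functor, and each restricted homomorphism is an isomorphism onto the relevant torus) that the paper leaves implicit.
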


\begin{lemma}
The $G_1 \rtimes T$-modules
$$\pi_1^*(\widehat{L}_1(\lambda)) \otimes \pi_2^*(\mu), \qquad \lambda \in X_1(T), \; \mu \in X(T),$$
form a complete set of pairwise non-isomorphic simple $G_1 \rtimes T$-modules.  The projective cover of $\pi_1^*(\widehat{L}_1(\lambda)) \otimes \pi_2^*(\mu)$ is given by $\pi_1^*(\widehat{Q}_1(\lambda)) \otimes \pi_2^*(\mu)$.
\end{lemma}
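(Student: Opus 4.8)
The plan is to handle the two assertions—that the listed modules are a complete irredundant set of simples, and that their projective covers have the stated form—by combining the inflation functor along $\pi_1$ with elementary Clifford theory for the normal infinitesimal subgroup $G_1$ inside $G_1 \rtimes T$. First I would dispose of simplicity and distinctness. Since $\pi_1$ is surjective, $\pi_1^*(\widehat{L}_1(\lambda))$ is simple, and since $\pi_2^*(\mu)$ is one-dimensional the functor $-\otimes \pi_2^*(\mu)$ is an autoequivalence (with inverse $-\otimes\pi_2^*(-\mu)$), so each $\pi_1^*(\widehat{L}_1(\lambda)) \otimes \pi_2^*(\mu)$ is simple. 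For distinctness I would restrict along $G_1 \rtimes 1 \cong G_1$: because $\pi_2^*(\mu)$ is $G_1$-trivial and $\pi_1^*(\widehat{L}_1(\lambda))|_{G_1} = L_1(\lambda)$, an isomorphism forces $L_1(\lambda)\cong L_1(\lambda')$ over $G_1$, whence $\lambda=\lambda'$ in $X_1(T)$; cancelling this common factor and restricting along $1\rtimes T = T$ (on which $\pi_2$ is the identity) then forces $\mu=\mu'$.

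Next I would prove completeness. Given a simple $G_1 \rtimes T$-module $S$, the $G_1$-socle $\operatorname{soc}_{G_1}(S)$ is stable under $T$ (as $T$ normalizes $G_1$), hence is a nonzero $G_1 \rtimes T$-submodule, so $S = \operatorname{soc}_{G_1}(S)$ is semisimple over $G_1$. Its finitely many $G_1$-isotypic components are permuted by the connected group $T$ and therefore individually fixed, so simplicity of $S$ gives $S|_{G_1} \cong L_1(\lambda)^{\oplus m}$ for a single $\lambda \in X_1(T)$. Setting $\widehat{S} = \pi_1^*(\widehat{L}_1(\lambda))$, so that $\widehat{S}|_{G_1} = L_1(\lambda)$, the space $W = \Hom_{G_1}(\widehat{S}, S)$ is naturally a $G_1 \rtimes T$-module on which $G_1$ acts trivially; hence it factors through $\pi_2$ and, $T$ being linearly reductive, decomposes as a sum of copies of $\pi_2^*(\mu_i)$. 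The evaluation map $\widehat{S} \otimes W \to S$ is $G_1\rtimes T$-equivariant and surjective (both sides being $L_1(\lambda)$-isotypic over $G_1$), so $S$ is a quotient of the simple module $\widehat{S} \otimes \pi_2^*(\mu_i)$ for some $i$, and therefore equals it.

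Finally, for the projective covers, I would first note that $\ker \pi_1 \cong T_1$ is diagonalizable, hence linearly reductive, so the inflation functor $\pi_1^*$ has the exact right adjoint $(-)^{T_1}$ and therefore preserves projectivity; thus $\pi_1^*(\widehat{Q}_1(\lambda))$, and then $P := \pi_1^*(\widehat{Q}_1(\lambda)) \otimes \pi_2^*(\mu)$, is projective. Restricting to $G_1$ gives $\widehat{Q}_1(\lambda)|_{G_1} \cong Q_1(\lambda)$, the indecomposable projective cover of $L_1(\lambda)$ over $G_1$ \cite[II.11]{rags}, so by \cref{L:torussummands} the module $P$ is indecomposable over $G_1 \rtimes T$. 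Since $P$ surjects onto the simple $V := \pi_1^*(\widehat{L}_1(\lambda)) \otimes \pi_2^*(\mu)$, one lifts this surjection through the projective cover $P(V) \to V$, splits the resulting surjection $P \to P(V)$ by projectivity of $P(V)$, and concludes from indecomposability that $P = P(V)$. The main obstacle I anticipate is the Clifford-theoretic completeness step: one must carefully verify that $S|_{G_1}$ is semisimple and isotypic and that the evaluation map carries a genuine $G_1\rtimes T$-structure making it equivariant and surjective, since everything downstream hinges on producing the twist $\pi_2^*(\mu)$ from the $G_1$-invariant $\Hom$-space.
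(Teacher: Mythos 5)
Your proposal is correct, and it shares the paper's Clifford-theoretic skeleton (simplicity and distinctness of the listed modules, completeness via the normal subgroup $G_1 \rtimes 1$, then projectivity plus indecomposability), but each of the three key steps is carried out by a different device, so the differences are worth recording. (1) Completeness: the paper applies \cref{L:torussummands} to conclude that a simple $G_1 \rtimes T$-module remains \emph{simple} (not merely semisimple) on restriction to $G_1 \rtimes 1$, so that $\Hom_{G_1 \rtimes 1}(\pi_1^*(\widehat{L}_1(\lambda)),V)$ is one-dimensional and is immediately some $\pi_2^*(\mu)$; you instead tolerate an isotypic restriction $L_1(\lambda)^{\oplus m}$, split $W=\Hom_{G_1}(\pi_1^*(\widehat{L}_1(\lambda)),S)$ into characters of $T$ by linear reductivity, and finish with the surjective evaluation map $\pi_1^*(\widehat{L}_1(\lambda)) \otimes W \to S$. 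Your version is the more general Clifford argument (it never needs multiplicity one, only connectedness and linear reductivity of $T$), while the paper's is shorter because its lemma is already in hand. (2) Distinctness: the paper compares $T$-characters and invokes $W$-invariance of $\operatorname{ch} \widehat{L}_1(\lambda)$ for restricted $\lambda$; you first force $\lambda=\lambda'$ by restricting to $G_1 \rtimes 1$ and then force $\mu=\mu'$ by cancellation in the integral domain $\mathbb{Z}[X(T)]$ --- equally valid and somewhat more elementary. (3) Projective covers: the paper deduces projectivity over $G_1 \rtimes T$ from projectivity over $G_1 \rtimes 1$ (linear reductivity of the quotient $T$), whereas you push projectivity forward through $\pi_1^*$ using exactness of $(-)^{T_1}$, the kernel $\ker\pi_1 \cong T_1$ being diagonalizable; both adjunction arguments are standard. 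Finally, where the paper merely asserts that the projective-cover identification is ``easily seen'', your lift-and-split argument (with indecomposability supplied by \cref{L:torussummands}) is a genuine proof; its only tacit assumption is that projective covers exist in the category of $G_1 \rtimes T$-modules, a standard fact parallel to the $G_1T$ theory, and one you could avoid entirely by observing that $\operatorname{rad}_{G_1}(P)$ is a $G_1 \rtimes T$-submodule with $G_1$-simple quotient, hence the unique maximal submodule of $P$, so that the surjection $P \twoheadrightarrow V$ is already essential.
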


\begin{proof}
As noted earlier, every $\pi_1^*(\widehat{L}_1(\lambda))$ is simple over $G_1 \rtimes T$, and restricts over $G_1 \rtimes 1$ as $\widehat{L}_1(\lambda)$ restricts over $G_1 \le G_1T$.  Furthermore, it is clear that each $\pi_1^*(\widehat{L}_1(\lambda)) \otimes \pi_2^*(\mu)$ is a simple $G_1 \rtimes T$-module.  If
$$\pi_1^*(\widehat{L}_1(\lambda)) \otimes \pi_2^*(\mu) \cong \pi_1^*(\widehat{L}_1(\gamma)) \otimes \pi_2^*(\sigma)$$
for $\lambda,\gamma \in X_1(T)$, then
$$\pi_1^*(\widehat{L}_1(\lambda)) \otimes \pi_2^*(\mu-\sigma) \cong \pi_1^*(\widehat{L}_1(\gamma)).$$
Since $\lambda,\gamma \in X_1(T)$, the characters of $\widehat{L}_1(\lambda)$ and $\widehat{L}_1(\gamma)$ are characters of $G$-modules, hence are $W$-invariant.  It follows then that $\mu-\sigma$ must also be $W$-invariant, hence we conclude that $\mu-\sigma=0$, and that $\lambda =\gamma$.  Thus the set described is a set of pairwise non-isomorphic simple $G_1 \rtimes T$-modules.

Conversely, given a $G_1 \rtimes T$-module $V$, since $G_1 \rtimes 1$ is a normal subgroup we have
$$\textup{soc}_{G_1 \rtimes 1}(V) \subseteq V$$
as a $G_1 \rtimes T$-submodule.  If $V$ is simple, then it must be semisimple over $G_1 \rtimes 1$.  By Lemma \ref{L:torussummands}, it follows that $V$ is in fact simple over $G_1 \rtimes 1$, therefore isomorphic to some $\pi_1^*(\widehat{L}_1(\lambda))$.  There is a $G_1 \rtimes T$-module structure on
$$\textup{Hom}_{G_1 \rtimes 1}(\pi_1^*(\widehat{L}_1(\lambda)),V),$$
a one-dimensional vector space that is trivial as a module for $G_1 \rtimes 1$, thus the action factors through the map $\pi_2: G_1 \rtimes T \rightarrow T$.  It follows then that there is some $\mu$ such that as a $G_1 \rtimes T$-module,
$$\textup{Hom}_{G_1 \rtimes 1}(\pi_1^*(\widehat{L}_1(\lambda)),V) \cong \pi_2^*(\mu).$$
We then have that
$$V \cong \pi_1^*(\widehat{L}_1(\lambda)) \otimes \pi_2^*(\mu).$$
This proves that every simple $G_1 \rtimes T$-module is of the form given above.

Finally, the module $\pi_1^*(\widehat{Q}_1(\lambda)) \otimes \pi_2^*(\mu)$, being projective over $G_1 \rtimes 1$, is also projective over $G_1 \rtimes T$, and is easily seen to be the injective hull and projective cover of $\pi_1^*(\widehat{L}_1(\lambda)) \otimes \pi_2^*(\mu)$.
\end{proof}

We note that when working with $G_1T$-modules, the characters of the simple modules are linearly independent in $\mathbb{Z}[X(T)]$.  This is not true for $G_1 \rtimes T$-modules, since the characters of the one-dimensional modules $\pi_2^*(\mu)$ alone form a basis of $\mathbb{Z}[X(T)]$.  However, for a fixed simple $G_1$-module $L_1(\lambda)$, the characters of all simple $G_1 \rtimes T$-modules which are isomorphic to $L_1(\lambda)$ over $G_1$ are linearly independent.

\begin{lemma}\label{L:determinedbychar}
A semisimple $G_1 \rtimes T$-module is not determined by its $T$-character, but it is determined by the $T$-characters of each isotypic component over $G_1$.
\end{lemma}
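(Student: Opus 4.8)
The plan is to read off both halves of the statement from the explicit classification of simple $G_1 \rtimes T$-modules proved above, together with the fact that $\mathbb{Z}[X(T)]$ is an integral domain. Write a semisimple $G_1 \rtimes T$-module as
$$V = \bigoplus_{\lambda \in X_1(T),\, \mu \in X(T)} \left( \pi_1^*(\widehat{L}_1(\lambda)) \otimes \pi_2^*(\mu) \right)^{\oplus m_{\lambda,\mu}},$$
so that its isomorphism type is recorded by the multiplicities $m_{\lambda,\mu} \in \N$.

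For the negative assertion I would produce two non-isomorphic semisimple modules sharing a $T$-character. Fix $\lambda \in X_1(T)$ with $\dim \widehat{L}_1(\lambda) > 1$ (such weights exist since $G$ is simple), and write $\textup{ch}(\widehat{L}_1(\lambda)) = \sum_\nu c_\nu e(\nu)$. The simple module $\pi_1^*(\widehat{L}_1(\lambda))$ and the semisimple module $\bigoplus_\nu \pi_2^*(\nu)^{\oplus c_\nu}$ (each $\pi_2^*(\nu)$ being simple and trivial over $G_1 \rtimes 1$) both have $T$-character $\sum_\nu c_\nu e(\nu)$ by the first character formula above, yet they are already non-isomorphic over $G_1 \rtimes 1$: the former restricts to the nontrivial simple $L_1(\lambda)$, the latter to a trivial module.

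For the positive assertion I would note that the $G_1$-isotypic decomposition of $V$ merely collects the summands above according to $\lambda$, since each $\pi_2^*(\mu)$ is trivial over $G_1 \rtimes 1$. Thus the $L_1(\lambda)$-isotypic component is $V_\lambda = \bigoplus_\mu (\pi_1^*(\widehat{L}_1(\lambda)) \otimes \pi_2^*(\mu))^{\oplus m_{\lambda,\mu}}$, with
$$\textup{ch}(V_\lambda) = \textup{ch}(\widehat{L}_1(\lambda)) \cdot \sum_\mu m_{\lambda,\mu}\, e(\mu).$$
The one substantive point is that $\sum_\mu m_{\lambda,\mu} e(\mu)$ is uniquely recoverable from $\textup{ch}(V_\lambda)$: as $\mathbb{Z}[X(T)]$ is a domain and $\textup{ch}(\widehat{L}_1(\lambda)) \neq 0$, cancellation pins down this factor, and with it every $m_{\lambda,\mu}$ for the given $\lambda$. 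This is exactly the linear independence of the characters $\textup{ch}(\widehat{L}_1(\lambda)) e(\mu)$ observed in the preceding remark. Ranging over all $\lambda \in X_1(T)$ recovers the full multiplicity array, hence the isomorphism type of $V$.

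I do not anticipate a real obstacle; the work is bookkeeping plus isolating the cancellation property of $\mathbb{Z}[X(T)]$, which turns ``determined by isotypic $T$-characters'' into a division in a Laurent polynomial ring. The only point needing care is the contrast driving the two halves apart: across distinct $\lambda$ the characters $\textup{ch}(\widehat{L}_1(\lambda)) e(\mu)$ can satisfy linear relations, which is what lets the total $T$-character fail to determine $V$, whereas within a fixed isotypic component no such relations can occur.
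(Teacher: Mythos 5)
Your proof is correct and follows essentially the same route as the paper, which states this lemma without a separate proof, treating it as an immediate consequence of the preceding remark: the characters $e(\mu)$ of the one-dimensional modules $\pi_2^*(\mu)$ alone span $\mathbb{Z}[X(T)]$ (so the total $T$-character cannot detect the module), while for fixed $\lambda$ the characters $\textup{ch}(\widehat{L}_1(\lambda))e(\mu)$ are linearly independent. Your explicit counterexample for the negative half and the cancellation argument in the integral domain $\mathbb{Z}[X(T)]$ for the positive half are exactly the details the paper leaves implicit, so nothing is missing.
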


The following result is familiar from the representation theory of $G_1T$.

\begin{lemma}\label{L:projdecomp}
Let $V$ be a projective $G_1 \rtimes T$-module.  Then there is a $G_1 \rtimes T$-module isomorphism
\begin{eqnarray*}
V & \cong &\bigoplus \left(\pi_1^*(\widehat{Q}_1(\lambda)) \otimes \textup{Hom}_{G_1}(\pi_1^*(\widehat{L}_1(\lambda)),V)\right)\\
& \cong & \bigoplus \left(\pi_1^*(\widehat{Q}_1(\lambda)) \otimes \textup{Hom}_{G_1}(\pi_1^*(\widehat{L}_1(\lambda)),V/\textup{rad}_{G_1}(V))\right)\\
\end{eqnarray*}
\end{lemma}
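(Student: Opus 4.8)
The plan is to reduce to the familiar decomposition theory over $G_1T$ while carefully tracking the extra $T$-equivariance coming from the semidirect factor. Since $V$ is projective over $G_1 \rtimes T$, it is a direct sum of indecomposable projectives, and by the preceding lemma these are exactly the projective covers $\pi_1^*(\widehat{Q}_1(\lambda)) \otimes \pi_2^*(\mu)$. As these have local endomorphism rings, Krull--Schmidt applies and I may write $V \cong \bigoplus_{\lambda,\mu} (\pi_1^*(\widehat{Q}_1(\lambda)) \otimes \pi_2^*(\mu))^{\oplus m_{\lambda,\mu}}$ for uniquely determined multiplicities $m_{\lambda,\mu}$. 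The entire content of the lemma is then to repackage these multiplicities, $T$-equivariantly, through the Hom-functors appearing on the right-hand sides.

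First I would observe that $\operatorname{rad}_{G_1}(V)$ is a $G_1 \rtimes T$-submodule: it is cut out by the augmentation ideal of $\operatorname{Dist}(G_1)$, which is stable under the conjugation action of $T$ because $T$ normalizes $G_1$. Hence $V/\operatorname{rad}_{G_1}(V)$ is a $G_1 \rtimes T$-module that is semisimple over $G_1 \rtimes 1$, and by Lemma \ref{L:torussummands} together with the classification of simple $G_1 \rtimes T$-modules it is semisimple over $G_1 \rtimes T$; in particular $\operatorname{rad}_{G_1 \rtimes T}(V) = \operatorname{rad}_{G_1}(V)$, so the two notions of head agree. Each multiplicity space $\Hom_{G_1}(\pi_1^*(\widehat{L}_1(\lambda)), V/\operatorname{rad}_{G_1}(V))$ is trivial as a $G_1$-module, so its action factors through $\pi_2$ and it is a genuine $T$-module. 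Reading off the head of the explicit decomposition above and using that $\operatorname{head}_{G_1}(\widehat{Q}_1(\lambda)) = \widehat{L}_1(\lambda)$ (\cite{rags}), this space is $\bigoplus_\mu (\pi_2^*(\mu))^{\oplus m_{\lambda,\mu}}$.

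The second isomorphism now follows from the uniqueness of projective covers. The module $\bigoplus_\lambda \pi_1^*(\widehat{Q}_1(\lambda)) \otimes \Hom_{G_1}(\pi_1^*(\widehat{L}_1(\lambda)), V/\operatorname{rad}_{G_1}(V))$ is projective, since each summand is a direct sum of the indecomposable projectives $\pi_1^*(\widehat{Q}_1(\lambda)) \otimes \pi_2^*(\mu)$, and its head is isomorphic to $V/\operatorname{rad}_{G_1}(V)$, the head of $V$; being the projective cover of their common head, the two modules are isomorphic. For the first isomorphism it remains to identify $\Hom_{G_1}(\pi_1^*(\widehat{L}_1(\lambda)), V)$ with the head multiplicity space as a $T$-module. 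Since $V$ is projective, hence injective, over $G_1$, any $G_1$-map out of the simple $\pi_1^*(\widehat{L}_1(\lambda))$ lands in $\operatorname{soc}_{G_1}(V)$, so this Hom computes the $G_1$-socle multiplicity; because $\operatorname{soc}_{G_1}(\widehat{Q}_1(\lambda)) = \widehat{L}_1(\lambda) = \operatorname{head}_{G_1}(\widehat{Q}_1(\lambda))$, the socle and head multiplicity spaces coincide as $T$-modules.

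The main obstacle is not the underlying module combinatorics, which is standard for $G_1T$, but verifying that each step is $G_1 \rtimes T$-linear rather than merely $\Bbbk$-linear: that the $G_1$-radical, socle, and head are honest $G_1 \rtimes T$-submodules, that the multiplicity spaces carry the correct $T$-action so that the diagonal action on $\pi_1^*(\widehat{Q}_1(\lambda)) \otimes (\text{multiplicity space})$ reproduces the $\pi_2^*(\mu)$-twists present in $V$, and that the socle and head data genuinely agree. This last point is exactly where the self-duality of $\widehat{Q}_1(\lambda)$, its being simultaneously the projective cover and the injective hull of $\widehat{L}_1(\lambda)$, is essential, and it is what allows both forms of the isomorphism to hold with the same multiplicities.
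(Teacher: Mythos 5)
Your proof is correct and takes essentially the same route as the paper's, which is only a sketch: the paper notes that projective $G_1 \rtimes T$-modules are determined by their $G_1$-socle, that the $G_1 \rtimes T$ head and socle are isomorphic, and defers the remaining bookkeeping to the $G_1T$-case in \cite[Proposition 4.1.3]{So} --- precisely the Krull--Schmidt and multiplicity-space argument you spell out. The only slip is in your justification that $\textup{rad}_{G_1}(V)$ is a $G_1 \rtimes T$-submodule: it equals $J \cdot V$ for $J$ the Jacobson radical of $\Dist{G_1}$, not the augmentation ideal acting on $V$ (these differ, since $G_1$ has nontrivial simple modules), but the fix is immediate because $J$ is a canonical two-sided ideal of $\Dist{G_1}$ and hence stable under the conjugation action of $T$ --- the same fact the paper invokes for the $\delta(G)$-action in Section 3.
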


\begin{proof}
The projective summands appearing in $V$ are determined by the $G_1 \rtimes T$-socle of $V$, which is the same as the $G_1$-socle as shown above.  The $G_1 \rtimes T$ head and socle are isomorphic.  From this the result follows exactly as in the $G_1T$-case (see \cite[Proposition 4.1.3]{So}.
\end{proof}

\section{Decomposing the adjoint action}

The conjugation action of $G$ on itself stabilizes the normal subgroup scheme $G_1$, making both $\Bbbk[G_1]$ and $\Dist{G_1}$ into $G$-modules.  Let $B$ be the Borel subgroup containing $T$ (implicitly chosen by a choice of dominant weights).  The triangular decomposition
$$\Dist{G_1} \cong \Dist{U_1} \otimes \Dist{T_1} \otimes \Dist{U_1^+},$$
where $U_1$ (resp. $U_1^+$) is the Frobenius kernel of the unipotent radical of the Borel subgroup $B$ (resp. the opposite Borel subgroup $B^+$) \cite[Lemma 3.3]{rags}, is stable under the conjugation action of $T$.  We have
\begin{eqnarray*}
\text{ch}(\Dist{U_1}) & = & \text{ch}(\St)e(-(p-1)\rho),\\
\text{ch}(\Dist{T_1}) & = & p^{\rank(G)}e(0),\\
\text{ch}(\Dist{U_1^+}) & = & \text{ch}(\St)e((p-1)\rho)
\end{eqnarray*}
From this we see that
$$\text{ch}(\Dist{G_1}) = p^{\rank(G)}\text{ch}(\St \otimes \St).$$
We will now compute this character in another way.

Denote by $\delta$ the diagonal embedding
$$\delta: G \rightarrow G \times G.$$
There is a $G \times G$-action on $G$ given by
$$(g_1,g_2).h = g_1hg_2^{-1} \quad \forall g_1,g_2,h \in G(A),  \text{ and every commutative $\Bbbk$-algebra $A$}.$$
The conjugation action of $G$ on itself is just a restriction to $\delta(G)$ of this $G \times G$-action.  Now, under the action by $G \times G$, the normal subgroup scheme $G_1 \le G$ is stabilized by the subgroup schemes
$$(G_1 \times G_1) \le (G \times G), \quad \delta(G) \le (G \times G),$$
and thus by any subgroup schemes contained in these.  In particular, it is stabilized by subgroup schemes $G_1 \times 1$ and $\delta(T)$.

We observe that $G_1 \times 1$ is normal subgroup scheme of $G \times $G, hence there is a subgroup scheme
$$(G_1 \times 1)\delta(T) \le (G \times G),$$
and this is easily seen to be isomorphic to $G_1 \rtimes T$.  This gives $\Dist{G_1}$ the structure of a $G_1 \rtimes T$-module via this identification.
It is projective as a $G_1 \times 1$-module, hence under this isomorphism is projective over $G_1 \rtimes T$.

\begin{theorem}
By restriction, regard the $G$-module $L(\lambda)$ as a $T$-module.  Under the action specified above, there is an isomorphism of $G_1 \rtimes T$-modules
$$\Dist{G_1} \cong \bigoplus_{\lambda \in X_1(T)} \pi_1^*(\widehat{Q}_1(\lambda)) \otimes \pi_2^*(L(\lambda)^*).$$
\end{theorem}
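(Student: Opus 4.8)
The plan is to combine the projectivity of $\Dist{G_1}$ over $G_1 \rtimes T$, established above, with an explicit computation of its $G_1$-head. Since $\Dist{G_1}$ is projective over $G_1 \rtimes T$, Lemma~\ref{L:projdecomp} gives a $G_1 \rtimes T$-isomorphism
$$\Dist{G_1} \cong \bigoplus_{\lambda \in X_1(T)} \pi_1^*(\widehat{Q}_1(\lambda)) \otimes M_\lambda, \qquad M_\lambda := \textup{Hom}_{G_1}\!\left(\pi_1^*(\widehat{L}_1(\lambda)), \Dist{G_1}/\textup{rad}_{G_1}(\Dist{G_1})\right).$$
Each $M_\lambda$ is trivial over $G_1 \rtimes 1$, so its action factors through $\pi_2$; thus $M_\lambda = \pi_2^*(N_\lambda)$ for a $T$-module $N_\lambda$, and the whole problem reduces to identifying $N_\lambda$ with $L(\lambda)^*$ as a $T$-module.

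To pin down $N_\lambda$ I would compute the $G_1$-head directly. Recall that $G_1 \times 1$ acts on $\Dist{G_1}$ by left multiplication and $\delta(T)$ by the adjoint action; since $\operatorname{Ad}(T)$ acts by algebra automorphisms it preserves the Jacobson radical, so the semisimple quotient $\Dist{G_1}/\textup{rad}_{G_1}(\Dist{G_1})$ is again a $G_1 \rtimes T$-module. By Wedderburn--Artin it is isomorphic, as an algebra, to $\bigoplus_{\lambda \in X_1(T)} \End(\widehat{L}_1(\lambda))$ via the representations $\rho_\lambda \colon \Dist{G_1} \to \End(\widehat{L}_1(\lambda))$, and because $T$ is connected it fixes each central idempotent and hence stabilizes each factor. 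The key point is that each $\rho_\lambda$ intertwines the adjoint action with conjugation: the $G_1T$-module compatibility $t\cdot(\mu v) = (\operatorname{Ad}(t)\mu)(t\cdot v)$ forces $\operatorname{Ad}(t)$ on $\Dist{G_1}$ to act on $\End(\widehat{L}_1(\lambda))$ by conjugation by the operator through which $t$ acts on $\widehat{L}_1(\lambda)$.

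Writing $\End(\widehat{L}_1(\lambda)) \cong \widehat{L}_1(\lambda) \otimes \widehat{L}_1(\lambda)^*$, the left-multiplication action of $\Dist{G_1}$ lands only on the first tensor factor, so as a left $G_1$-module this is $\widehat{L}_1(\lambda)$ with multiplicity space $\widehat{L}_1(\lambda)^*$; the conjugation $T$-action is the diagonal one, so the multiplicity space carries precisely the dual $T$-action. Hence $N_\lambda \cong \widehat{L}_1(\lambda)^*$ as a $T$-module. Since $\lambda \in X_1(T)$, we have $\widehat{L}_1(\lambda) \cong L(\lambda)$ as $G_1T$-modules, so $N_\lambda \cong L(\lambda)^*$ as $T$-modules, which is the claim. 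As a consistency check, taking total $T$-characters recovers $\sum_{\lambda} \textup{ch}(\widehat{Q}_1(\lambda))\,\textup{ch}(L(\lambda)^*) = p^{\rank(G)}\textup{ch}(\St)^2$, matching the character of $\Dist{G_1}$ computed above.

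I expect the main obstacle to be the bookkeeping in the previous two paragraphs: verifying that the combined left-multiplication and adjoint structure genuinely descends to the conjugation action on each Wedderburn block, and tracking the duality carefully enough to land on $L(\lambda)^*$ rather than $L(\lambda)$. Everything else is formal given Lemma~\ref{L:projdecomp} and the projectivity of $\Dist{G_1}$; alternatively, one could phrase the identification of the $N_\lambda$ through the $T$-characters of the $G_1$-isotypic components of the head and invoke Lemma~\ref{L:determinedbychar} to conclude.
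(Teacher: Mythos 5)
Your proposal is correct and takes essentially the same route as the paper: both use the projectivity of $\Dist{G_1}$ over $G_1 \rtimes T$ and Lemma \ref{L:projdecomp} to reduce the problem to identifying the $T$-module structure on the multiplicity spaces coming from the $G_1$-head of $\Dist{G_1}$. The only difference is that where the paper cites Cline--Parshall--Scott \cite{CPS} for the structure of $\Dist{G_1}/\textup{rad}_{G_1}(\Dist{G_1})$ and then applies Lemma \ref{L:determinedbychar}, you reprove that result inline via the Wedderburn decomposition and the intertwining relation $\rho_\lambda(\operatorname{Ad}(t)\mu)=t\rho_\lambda(\mu)t^{-1}$, which is precisely the content of the cited argument.
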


\begin{proof}
By Lemma \ref{L:projdecomp}, this reduces to proving that as a $T$-module,
$$\textup{Hom}_{G_1 \rtimes 1}(\pi_1^*(\widehat{L}_1(\lambda)),\Dist{G_1}/\textup{rad}_{G_1}(\Dist{G_1})) \cong L(\lambda)^*$$
for each $\lambda \in X_1(T)$.

The Jacobson radical $J(\Dist{G_1})$, being a canonical two-sided ideal of $\Dist{G_1}$, is stablized by the action of both $G_1 \times G_1$ and by $\delta(G)$.  The argument by Cline, Parshall, and Scott in \cite[Theorem 1]{CPS} shows that as a $G$-module (via the action of $\delta(G)$), there is an isomorphism 
$$\Dist{G_1}/\textup{rad}_{G_1}(\Dist{G_1}) \cong \bigoplus_{\lambda \in X_1(T)} \left(L(\lambda) \otimes L(\lambda)^*\right),$$
and that the summand $L(\lambda) \otimes L(\lambda)^*$ is precisely the $L_1(\lambda)$-isotypic component of
$$\Dist{G_1}/\textup{rad}_{G_1}(\Dist{G_1})$$
as a left $\Dist{G_1}$-module.  Applying Lemma \ref{L:determinedbychar}, it follows that the $L_1(\lambda)$-isotypic component, as a $G_1 \rtimes T$-module, is isomorphic to
$$\pi_1^*(\widehat{L}_1(\lambda)) \otimes \pi_2^*(L(\lambda)^*),$$
completing the proof.
\end{proof}

\section{Proof of Main Results}

It now follows that there is an equality of characters
$$p^{\rank(G)}\text{ch}(\St \otimes \St) = \sum_{\lambda \in X_1} \text{ch}\left(\widehat{Q}_1(\lambda) \otimes L(\lambda)^* \right).$$
Since projective $G_1T$-modules are determined by their characters, this proves Theorem \ref{T:B}.

Kildetoft and Nakano showed that if $T((p-1)\rho+\lambda)$ is indecomposable over $G_1$ for every $\lambda \in X_1(T)$, then $\St \otimes L(\mu)$ is tilting for all $\mu \in X_1(T)$ \cite{KN}.  If this holds, then one sees in fact that given any $\gamma \in (p-1)\rho+X(T)_+$, the module
$$T(\gamma) \otimes L(\lambda)$$
is tilting.  This implies in particular that all modules of the form
$$T((p-1)\rho+\lambda) \otimes L((p-1)\rho-\lambda)$$
are tilting (when every $T((p-1)\rho+\lambda)$ is indecomposable over $G_1$).  Since tilting modules are determined by their characters, we have that Theorem \ref{T:B} implies Theorem \ref{T:A}.

\section{Final Remarks}

We conclude with an observation that the highest weight summands of the isomorphism contained in Theorem \ref{T:A} are the same in all characteristics, without knowing if the tilting module conjecture holds.

\begin{lemma}
If $\lambda \in X_1(T)$ and $\gamma \in (p-1)\rho+X(T)_+$, then $T(\gamma + \lambda) \mid T(\gamma) \otimes L(\lambda)$.
\end{lemma}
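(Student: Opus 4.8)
The plan is to reduce to the Steinberg case already recorded in the introduction, namely that $T((p-1)\rho+\lambda)$ is the highest weight summand of $\St\otimes L(\lambda)$ \cite{P}; the hypothesis $\gamma\in(p-1)\rho+X(T)_+$ enters precisely by allowing a Steinberg factor to be split off of $T(\gamma)$. Writing $\gamma=(p-1)\rho+\eta$ with $\eta\in X(T)_+$, the modules $\St=T((p-1)\rho)$ and $T(\eta)$ are both tilting, so $\St\otimes T(\eta)$ is tilting; its highest weight is $(p-1)\rho+\eta=\gamma$, occurring with multiplicity one, so its highest weight summand is the indecomposable tilting module $T(\gamma)$. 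I would therefore fix a decomposition $\St\otimes T(\eta)\cong T(\gamma)\oplus Y$, and note that $\gamma$ does not occur as a weight of $Y$ (its multiplicity-one highest weight space is exhausted by the $T(\gamma)$ summand).

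Set $\xi=\gamma+\lambda$. Tensoring the decomposition with $L(\lambda)$ gives
$$\St\otimes L(\lambda)\otimes T(\eta)\;\cong\;\bigl(T(\gamma)\otimes L(\lambda)\bigr)\oplus\bigl(Y\otimes L(\lambda)\bigr),$$
and since $\gamma$ is not a weight of $Y$ while $\lambda$ is the top weight of $L(\lambda)$, the weight $\xi$ does not occur in $Y\otimes L(\lambda)$. On the other hand, \cite{P} gives $T((p-1)\rho+\lambda)\mid\St\otimes L(\lambda)$, hence $T((p-1)\rho+\lambda)\otimes T(\eta)\mid\St\otimes L(\lambda)\otimes T(\eta)$; as this tensor product of tilting modules is tilting with highest weight $(p-1)\rho+\lambda+\eta=\xi$ of multiplicity one, its highest weight summand is $T(\xi)$. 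Thus $T(\xi)$ is a direct summand of $\St\otimes L(\lambda)\otimes T(\eta)$.

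The remaining step, which is where I expect the only real care is needed, is to transfer this copy of $T(\xi)$ into the summand $T(\gamma)\otimes L(\lambda)$. Here $\xi$ is the unique maximal weight of $\St\otimes L(\lambda)\otimes T(\eta)$ and has multiplicity one, and by the previous paragraph its entire weight space lies inside $T(\gamma)\otimes L(\lambda)$. Since any indecomposable summand isomorphic to $T(\xi)$ must contain a vector of weight $\xi$, the Krull--Schmidt theorem forces the copy of $T(\xi)$ produced above to coincide with the unique indecomposable summand of $T(\gamma)\otimes L(\lambda)$ carrying the weight-$\xi$ line. This yields $T(\gamma+\lambda)\mid T(\gamma)\otimes L(\lambda)$, as desired. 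Apart from this bookkeeping, the argument rests only on the standard facts that a tensor product of tilting modules is tilting and that a tilting module whose maximal weight has multiplicity one has the corresponding indecomposable tilting module as its highest weight summand.
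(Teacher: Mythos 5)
Your proof is correct, but it follows a different implementation of the same underlying technique, with a different auxiliary module and a different external input. The paper writes $\gamma=(p-1)\rho+\mu$ and forms the four-fold tensor product $M=T(\mu)\otimes T(\lambda)\otimes T((p-1)\rho-\lambda)\otimes L(\lambda)$, regrouping it in two ways: one grouping yields the summand $T(\mu+\lambda)\otimes\St$, hence $T(\gamma+\lambda)$, and the other yields $T(\gamma)\otimes L(\lambda)$; the external input there is Pillen's splitting $\St\mid T((p-1)\rho-\lambda)\otimes L(\lambda)$, together with the fact that $\St\mid T(\lambda)\otimes T((p-1)\rho-\lambda)$. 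You instead work with the three-fold product $\St\otimes L(\lambda)\otimes T(\eta)$ and take as your seed the other form of Pillen's theorem, $T((p-1)\rho+\lambda)\mid\St\otimes L(\lambda)$, which is precisely the statement quoted in the paper's introduction. Both arguments then finish the same way: tensor products of tilting modules are tilting, a multiplicity-one maximal weight picks out the corresponding indecomposable tilting summand, and Krull--Schmidt plus a weight count localizes that summand inside the desired factor. Neither route is more elementary --- each rests on a nontrivial result from \cite{P} --- but yours is slightly leaner (one fewer tensor factor), and your explicit verification that $\xi=\gamma+\lambda$ does not occur as a weight of $Y\otimes L(\lambda)$ makes rigorous the transfer step that the paper compresses into its final sentence. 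The paper's version, on the other hand, stays closer to the original argument in \cite{P}, which is why the author calls it ``just a variation of a key argument used in \cite{P}.''
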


\begin{proof}
This is just a variation of a key argument used in \cite{P}.  Let $\mu \in X(T)_+$ be the dominant weight such that
$$\gamma = (p-1)\rho + \mu.$$
Consider the module
$$M =T(\mu) \otimes T(\lambda) \otimes T((p-1)\rho-\lambda) \otimes L(\lambda).$$
The highest weight of $M$ is $\gamma+\lambda$, occuring with multiplicity $1$.  Since $\St$ is a summand of $T(\lambda) \otimes T((p-1)\rho-\lambda)$ and of $T((p-1)\rho-\lambda) \otimes L(\lambda)$, then decomposing this quadruple tensor product in two different ways, we obtain
$$T(\mu+\lambda) \otimes \St$$
as a summand in one way, and
$$T((p-1)\rho+\mu) \otimes L(\lambda)$$
in another.  The first shows that $T(\gamma+\lambda)$ is a summand of $M$.  Recalling that $\gamma=(p-1)\rho+\mu$, the second shows that this summand must appear in a decomposition of
$$T(\gamma) \otimes L(\lambda).$$
\end{proof}

\begin{remark}
In comparison with the proof of Theorem \ref{T:A} in the previous section, note that this lemma holds for all primes.  That is, it does not depend on knowing if $T(\gamma) \otimes L(\lambda)$ is tilting.
\end{remark}

\begin{cor}
If $\lambda \in X_1(T)$, then $T(2(p-1)\rho) \mid (T((p-1)\rho+\lambda)) \otimes L((p-1)\rho-\lambda))$.
\end{cor}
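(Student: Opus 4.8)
The plan is to apply the preceding Lemma with a carefully chosen weight $\gamma$ so that $T(2(p-1)\rho)$ emerges as the relevant highest weight summand. Concretely, I would set $\gamma = (p-1)\rho + \lambda$ and observe that this weight lies in $(p-1)\rho + X(T)_+$ precisely because $\lambda \in X_1(T) \subseteq X(T)_+$. The Lemma (with the restricted weight in its statement taken to be $(p-1)\rho - \lambda$) then tells us that $T(\gamma + ((p-1)\rho - \lambda))$ divides $T(\gamma) \otimes L((p-1)\rho - \lambda)$.

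The key step is then purely arithmetic: the highest weight of the resulting tilting summand is
$$\gamma + ((p-1)\rho - \lambda) = \bigl((p-1)\rho + \lambda\bigr) + \bigl((p-1)\rho - \lambda\bigr) = 2(p-1)\rho,$$
so the Lemma immediately yields $T(2(p-1)\rho) \mid T((p-1)\rho + \lambda) \otimes L((p-1)\rho - \lambda)$, which is exactly the claim. I should first check the two hypotheses of the Lemma are genuinely met: that $\gamma = (p-1)\rho + \lambda$ sits in $(p-1)\rho + X(T)_+$, and that the second tensor factor $(p-1)\rho - \lambda$ is a restricted dominant weight, i.e. lies in $X_1(T)$. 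The latter holds since $0 \le \lambda \le (p-1)\rho$ componentwise for $\lambda \in X_1(T)$ forces $(p-1)\rho - \lambda$ to again be restricted.

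The only genuine obstacle I anticipate is verifying that $(p-1)\rho - \lambda \in X_1(T)$, which requires unwinding the definition of $X_1(T)$ as the $p$-restricted dominant weights: writing $\lambda = \sum_i a_i \varpi_i$ with each $0 \le a_i \le p-1$, one has $(p-1)\rho - \lambda = \sum_i (p-1-a_i)\varpi_i$, and since $0 \le p-1-a_i \le p-1$ this is again restricted. With both hypotheses confirmed, the corollary is an immediate specialization of the Lemma, so no substantive new argument is needed beyond the weight bookkeeping.
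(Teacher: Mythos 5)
Your proposal is correct and matches the paper's (implicit) argument exactly: the corollary is stated as an immediate consequence of the preceding Lemma, applied with $\gamma = (p-1)\rho + \lambda$ and with the restricted weight taken to be $(p-1)\rho - \lambda$, exactly as you do. Your verification that $(p-1)\rho - \lambda \in X_1(T)$ (valid since $G$ is simply connected, so $X(T)$ is spanned by the fundamental weights) is the only hypothesis check needed, and you carry it out correctly.
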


This shows that for all primes, the summand $T(2(p-1)\rho)$ appears $p^{\rank(G)}$-times in the direct sums of modules appearing in the statement of Theorem \ref{T:A}.  Thus, if the isomorphism fails to hold somewhere, it is because of an imbalance of the other summands.

\providecommand{\bysame}{\leavevmode\hbox
to3em{\hrulefill}\thinspace}

\end{document}